\chardef\bslash=`\\ 
\def\verbatim{\interlinepenalty\@M \@verbatim
\leftskip\@totalleftmargin\advance\leftskip2pc
\frenchspacing\@vobeyspaces \@xverbatim} \makeatother \hfuzz1pc
\def\dgt@k{\dg@DX=-3 \dg@DY=2 \dg@SIZE=3}
\def\dgt@kk{\dg@DX=3 \dg@DY=-1 \dg@SIZE=3}
\theoremstyle{plain} \newtheorem{thm}{Theorem}[section]
\newtheorem{cor}[thm]{Corollary}
\newtheorem{lemma}[thm]{Lemma}
\theoremstyle{definition} 
\newtheorem{defin}[thm]{Definition} 
\begin{document}

\title[Remarks on asymptotic power dimension]
{Remarks on asymptotic power dimension}
\author[J. Kucab]{Jacek Kucab}
\address{Faculty of Mathematics and Natural Sciences,
University of Rzesz\'ow, Rejtana 16 A, 35-310 Rze\-sz\'ow, Poland}
\email{mzar@litech.lviv.ua}
\email{}
\author[M. Zarichnyi]{ Michael Zarichnyi}
\address{Department of Mechanics and Mathematics,
Lviv National University, Universytetska Str. 1, 79000 Lviv, Ukraine}
\address{Faculty of Mathematics and Natural Sciences,
University of Rzesz\'ow, Rejtana 16 A, 35-310 Rze\-sz\'ow, Poland}
\email{mzar@litech.lviv.ua}

\thanks{}
\subjclass[2010]{54F45, 54E35}

\keywords{Asymptotic dimension, Assouad-Nagata dimension, asymptotic power dimension, sublinear corona}
\date{\today}


\begin{abstract} Using a result of Dranishnikov and Smith we prove that, under some conditions, the asymptotic power dimension of a proper metric space coincides with the dimension of its subpower corona.
\end{abstract}

\maketitle
\section{Introduction}

The asymptotic power dimension is defined in \cite{KZ}. In this note we use one result of \cite{KZ} in order to derive the coincidence of the asymptotic power dimension of a proper metric space and the dimension of the subpower corona (this corona is introduced in \cite{KZ1}) of this space.

\section{Preliminaries}

A metric space $X$ is called proper if every closed ball in $X$ is compact.

Let $D>0$. For a metric space $(X,\rho)$ and a family $\mathcal{U}$ of subsets of $X$ we
say that  $\mathcal{U}$ is $D$-bounded if for any $U\in \mathcal{U}$ we have
$\mathrm{diam}(U)\leq D$;  $\mathcal{U}$ is $D$-disjoint if for any $U,V\in \mathcal{U}$, $U\neq
V$ we have $\mathrm{inf}\lbrace \rho(x,y)\vert x\in U, y\in V\rbrace>D$;

Recall that a metric space $(X,\rho)$ is called
cocompact
if there is a compact subset
$K\subset
X$
such that
$X
=
\cup \{f(K)\mid f\in\mathrm{Isom}(X)\}$, where $\mathrm{Isom}(X)$ is the set of all isometries of
$X$.

A metric space $(X,\rho)$ is said to be $M$-connected, for some $M>0$ if for every $x,y\in X$ there exist $x=x_0,x_1,\dots,x_k=y$ such that $\{x_0,x_1,\dots,x_k\}\subset X$ and $\rho(x_i,x_{i+1})\le M$ for every $i=0,1,\dots,k-1$.

\begin{defin}
For a metric space $(X,\rho)$ and a nonnegative integer $n$ we say that the
asymptotic Assouad-Nagata dimension of $X$ does not exceed $n$
($\mathrm{asdim}_{AN}X\leq n$) if there are constants $c>0$ and $r_0>0$ such
that for any $r>r_0$ there exist $r$-disjoint and $cr$-bounded families
$\mathcal{U}_0, \mathcal{U}_1,\dots,\mathcal{U}_n$ of subsets of $X$ such that
$\bigcup_{i=0}^{n}\mathcal{U}_i$ is a cover of $X$.
\end{defin}
In a similar manner, one can introduce the notion of the asymptotic power dimension of a
metric space as follows \cite{KZ}:
\begin{defin}
For a metric space $(X,\rho)$ and a nonnegative integer $n$ we say that the
asymptotic power dimension of $X$ does not exceed $n$ ($\mathrm{asdim}_P X\leq
n$) if there are constants $\alpha>0$ and $r_0>0$ such that for any $r>r_0$
there exist $r$-disjoint and $r^\alpha$-bounded families $\mathcal{U}_0,
\mathcal{U}_1,...,\mathcal{U}_n$ of subsets of $X$ such that
$\bigcup_{i=0}^{n}\mathcal{U}_i$ is a cover of $X$.
\end{defin}

As usual we will say that $\mathrm{asdim}_P X= n$, for $n\ge1$, when $\mathrm{asdim}_P X\leq
n$ and it is not true that $\mathrm{asdim}_P X\leq n-1$.

It is well-known that for any metric $\rho$ on a set $X$ the function $\rho^\prime
: X\times X\to \mathbb{R}$, $\rho^\prime(x,y)=\mathrm{ln}(1+\rho(x,y))$, is also a
metric on $X$. In the sequel, we will denote $(X,\rho)$ by $X$ and $(X,\rho^\prime)$ by
$X^\prime$. Note that for a proper and unbounded metric space $X$,
the space $X^\prime$ is also proper and unbounded.

The following result is proved in \cite{KZ}; we provide the proof for the reader's convenience.
\begin{thm}
For any metric space $(X,\rho)$ we have $\mathrm{asdim}_P X=\mathrm{asdim}_{AN}
X^\prime$
\end{thm}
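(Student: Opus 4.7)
The plan is to translate both the disjointness and boundedness conditions between $X$ and $X'$ via the monotone function $t \mapsto \ln(1+t)$, and then verify that an $r$-disjoint/$r^\alpha$-bounded cover of $X$ corresponds precisely (up to constants) to an $R$-disjoint/$cR$-bounded cover of $X'$ with $R = \ln(1+r)$.

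First I would record the two elementary observations. A family $\mathcal U$ of subsets of the underlying set is $R$-disjoint with respect to $\rho'$ if and only if it is $(e^R-1)$-disjoint with respect to $\rho$, since $\rho'(x,y) > R \Leftrightarrow \rho(x,y) > e^R - 1$. Likewise, $\mathrm{diam}_{\rho'}(U) \le D$ iff $\mathrm{diam}_\rho(U) \le e^D - 1$. These equivalences are the engine of the whole argument.

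For the inequality $\mathrm{asdim}_{AN} X' \le \mathrm{asdim}_P X$, I would start from $r$-disjoint, $r^\alpha$-bounded families $\mathcal U_0,\dots,\mathcal U_n$ in $X$ (valid for $r > r_0$) and substitute $r = e^R - 1$, so that these same families are $R$-disjoint in $X'$ and $\ln(1+(e^R-1)^\alpha)$-bounded in $X'$. Since for $R$ large enough one has $\ln(1+(e^R-1)^\alpha) \le (\alpha+1)R$, this gives $cR$-bounded families for the constant $c = \alpha+1$, yielding $\mathrm{asdim}_{AN} X' \le n$. Conversely, given $R$-disjoint, $cR$-bounded families in $X'$ for $R > R_0$, setting $R = \ln(1+r)$ makes them $r$-disjoint in $X$ and $((1+r)^c - 1)$-bounded in $X$. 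Since $(1+r)^c - 1 \le r^{c+1}$ for $r$ sufficiently large, one obtains the desired $r^\alpha$-boundedness with $\alpha = c+1$, so $\mathrm{asdim}_P X \le n$.

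The proof is essentially a bookkeeping exercise and I do not expect a serious obstacle; the only point that needs care is checking that the threshold conditions ``for $r > r_0$'' and ``for $R > R_0$'' transport correctly under the bijection $r \leftrightarrow R$, and that the auxiliary estimates $\ln(1+(e^R-1)^\alpha) \le (\alpha+1)R$ and $(1+r)^c - 1 \le r^{c+1}$ hold for sufficiently large $R$, respectively $r$, which can be absorbed by enlarging $r_0$ or $R_0$. Once these are in place, the same families witness both dimension bounds, and the equality $\mathrm{asdim}_P X = \mathrm{asdim}_{AN} X'$ follows.
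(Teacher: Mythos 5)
Your proposal is correct and follows essentially the same route as the paper: both translate disjointness and boundedness through the monotone bijection $t\mapsto\ln(1+t)$ and absorb the resulting error terms by enlarging the threshold, differing only in the choice of constants (you take $c=\alpha+1$ and $\alpha=c+1$ where the paper uses $c=2\alpha$ and $\alpha=c+2^c$, which is immaterial). The auxiliary estimates $\ln\bigl(1+(e^R-1)^\alpha\bigr)\le(\alpha+1)R$ for $R\ge\ln 2$ and $(1+r)^c-1\le r^{c+1}$ for $r\ge 2^c$ both check out, so no gap remains.
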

\begin{proof}
Let $\mathrm{asdim}_P X\leq n$. Let $\alpha$ and $r_0$ be as in the definition.
Let $c=2\alpha$ and let $r_0^\prime=\mathrm{max}(r_0,\frac{\mathrm{ln}2}
{\alpha})$. Let $r>r_0^\prime$. Then $e^r-1\geq r>r_0^\prime\geq r_0$ and thus there
exist $(e^r-1)$-disjoint($\rho$) and $(e^r-1)^\alpha$-bounded($\rho$) families
$\mathcal{U}_0, \mathcal{U}_1,\dots,\mathcal{U}_n$ of subsets of $X$ such that
$\bigcup_{i=0}^{n}\mathcal{U}_i$ is a cover of $X$. Let $x\in U$, $y\in V$ for
any $U,V\in \mathcal{U}_i$, $U\neq V$, and any $i\in\lbrace 0,1, \dots, n\rbrace$.
Then $$\rho^\prime(x,y)=\mathrm{ln}(1+\rho(x,y))>\mathrm{ln}(1+e^r-1)=r.$$ Let now
$x,y\in U$ for any $U\in \mathcal{U}_i$ and any $i\in\lbrace 0,1, \dots,
n\rbrace$. Then we have $$\rho^\prime(x,y)=\mathrm{ln}
(1+\rho(x,y))\leq\mathrm{ln}(1+(e^r-1)^\alpha)\leq \mathrm{ln}(1+e^{\alpha
r})\leq\mathrm{ln}(2e^{\alpha r})< \mathrm{ln}e^{2\alpha r}=cr,$$ for $\alpha
r>\mathrm{ln}2$, i.e.  $e^{\alpha r}>2$. Therefore for any $i\in\lbrace
0,1,\dots, n\rbrace$, the family $\mathcal{U}_i$ is $r$-disjoint($\rho^\prime$) and
$cr$-bounded($\rho^\prime$). Hence $\mathrm{asdim}_{AN}X^\prime\leq n$ and therefore
$\mathrm{asdim}_{AN}X^\prime\leq\mathrm{asdim}_P X$.

Let now
$\mathrm{asdim}_{AN} X^\prime\leq n$. Let $c$ and $r_0^\prime$ be as in the
definition. Let $\alpha=c+2^c$ and $r_0=e^{r_0^\prime}$. Let $r>r_0$. Hence
$r+1>r_0>1$, i.e. $\mathrm{ln}(r+1)>r_0^\prime$. Then there exist
$\mathrm{ln}(r+1)$-disjoint($\rho^\prime$) and $c\mathrm{ln}(r+1)$-bounded($\rho^\prime$) families $\mathcal{U}_0, \mathcal{U}_1,\dots,\mathcal{U}_n$
of subsets of $X$ such that $\bigcup_{i=0}^{n}\mathcal{U}_i$ is a cover of $X$.
Let $x\in U$, $y\in V$ for any $U,V\in \mathcal{U}_i$, $U\neq V$, and any
$i\in\lbrace 0,1, \dots, n\rbrace$. Then $$\rho(x,y)=e^{\rho^\prime(x,y)}-
1>e^{\mathrm{ln}(r+1)}-1=r.$$ Let now $x,y\in U$ for any $U\in \mathcal{U}_i$ and
any $i\in\lbrace 0,1,\dots, n\rbrace$. Then $$\rho(x,y)=e^{\rho^\prime(x,y)}-1\leq
e^{c\mathrm{ln}(r+1)}-1<(r+1)^c<(2r)^c<r^\alpha,$$ since for any $r$ we have
$\alpha>c+\mathrm{log}_r 2^c$ and hence $r^\alpha>r^c2^c$. Therefore for any
$i\in\lbrace 0,1, \dots, n\rbrace$ the family $\mathcal{U}_i$ is $r$-disjoint($\rho$) and $r^\alpha$-bounded($\rho$). Therefore $\mathrm{asdim}_P
X\leq n$. We conclude that  $\mathrm{asdim}_P X\leq\mathrm{asdim}_{AN} X^\prime$ and
therefore $\mathrm{asdim}_P X=\mathrm{asdim}_{AN} X^\prime$.
\end{proof}

The notions of sublinear (resp. subpower) corona are introduced in \cite{DS} and \cite{KZ1} respectively.  The necessary definitions are given below.
\begin{defin}
We recall that a function $p:\mathbb{R}_+\to\mathbb{R}_+$ is called
asymtotically subpower (resp. asymtotically sublinear) if for any $\alpha>0$ [$c>0$]
there exists $r_0>0$ such that for any $x>r_0$ statement $p(x)<x^\alpha$
(resp. $p(x)<cx$) holds.
\end{defin}
\begin{defin}
Let $(X,\rho)$ be an unbounded proper metric space with a basepoint $x_0$. We
say that a continous, bounded function $f:X\to\mathbb{R}$ is Higson subpower
(resp. Higson sublinear) if for any asymtotically subpower function $p$ (resp. asymtotically
sublinear function $p$) we have $\mathrm{lim}_{\vert
x\vert\to\infty}\mathrm{diam}(f(B_{p(\vert x\vert)}(x)))=0$, where $\vert
x\vert=\rho(x,x_0)$.
\end{defin}
It is not hard to see that Higson subpower functions as well as Higson sublinear
functions on a proper, unbounded metric space form  subalgebras of algebra of
all continuous and bounded functions on $X$. We will denote them by $CB_P(X)$ and
$CB_L(X)$ respectively.
\begin{thm}\label{t:1}
For an unbounded, proper metric space $(X,\rho)$ we have $CB_P(X)=CB_L(X^\prime)
$.
\end{thm}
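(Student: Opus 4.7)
The plan is to reduce the statement to a correspondence between the two classes of test functions and then observe that the balls appearing in the two Higson conditions coincide under this correspondence.

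First I would record three basic dictionary entries between $X$ and $X'$: (i) the underlying set is the same, so a bounded continuous function $f$ on $X$ is automatically a bounded continuous function on $X'$ (since $\rho$ and $\rho'=\ln(1+\rho)$ induce the same topology); (ii) if $|x|=\rho(x,x_0)$ and $|x|'=\rho'(x,x_0)$, then $|x|' = \ln(1+|x|)$, so $|x|\to\infty$ if and only if $|x|'\to\infty$, and $X'$ is unbounded and proper exactly when $X$ is; (iii) for any $r>0$ and $x\in X$, the $\rho'$-ball $B_r^{\rho'}(x)$ equals the $\rho$-ball $B_{e^r-1}^\rho(x)$.

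The heart of the proof is the following bijection between test functions: given $q:\mathbb{R}_+\to\mathbb{R}_+$, set
\[
p(t) = e^{q(\ln(1+t))} - 1,
\qquad
q(s) = \ln\bigl(1+p(e^s-1)\bigr).
\]
I would prove the two one-line lemmas: $q$ is asymptotically sublinear if and only if $p$ is asymptotically subpower. The forward direction uses $(1+t)^c < t^\alpha$ for $c<\alpha$ and large $t$; the reverse uses $\ln(1+(e^s-1)^\alpha)\le \ln(2e^{\alpha s}) = \ln 2 + \alpha s < cs$ for large $s$ whenever $\alpha<c$. This is precisely the same manipulation already performed in the proof of the preceding theorem, so no new calculation is needed.

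With this dictionary in hand, combining (iii) with the correspondence gives
\[
B^{\rho'}_{q(|x|')}(x) \;=\; B^{\rho}_{e^{q(\ln(1+|x|))}-1}(x) \;=\; B^{\rho}_{p(|x|)}(x),
\]
so as $q$ ranges over all asymptotically sublinear functions and $p$ over all asymptotically subpower functions the two families of balls agree. Since $|x|\to\infty$ iff $|x|'\to\infty$, the condition $\lim_{|x|'\to\infty}\operatorname{diam}f(B^{\rho'}_{q(|x|')}(x))=0$ for all sublinear $q$ is equivalent to $\lim_{|x|\to\infty}\operatorname{diam}f(B^{\rho}_{p(|x|)}(x))=0$ for all subpower $p$, which is exactly $f\in CB_P(X)\iff f\in CB_L(X')$.

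The only possible obstacle is verifying the bijection between sublinear and subpower test functions at the required uniformity (it has to hold for \emph{every} $\alpha$ and every $c$, not just one), but since we are free to choose the correspondence pointwise and the estimates above are the same ones used in Theorem~2.1, this step is essentially free.
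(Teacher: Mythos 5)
Your proposal is correct, and its core is the same change of variables the paper uses: conjugate balls and test functions by $t\mapsto\ln(1+t)$. The inclusion $CB_P(X)\subset CB_L(X')$ in the paper is exactly your computation (the paper's $\psi(|x|)=e^{\phi(\ln(1+|x|))}-1$ is your $p$). Where you diverge is the reverse inclusion: you exploit the fact that the correspondence $p(t)=e^{q(\ln(1+t))}-1$, $q(s)=\ln(1+p(e^s-1))$ is an exact involution carrying subpower functions to sublinear ones and giving \emph{equality} of the balls $B^{\rho}_{p(|x|)}(x)=B^{\rho'}_{q(|x|')}(x)$, so both inclusions follow from one symmetric lemma. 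The paper instead treats this direction asymmetrically: given a subpower $\phi$ it hand-builds a piecewise sublinear majorant $\psi(x)=x/n$ on $(c_n,c_{n+1}]$ and proves only the containment $B_{\phi(|x|)}(x)\subset B^{\rho'}_{\psi(|x|')}(x)$, which suffices because diameters are monotone under inclusion. Your version is cleaner and avoids the auxiliary construction; the estimates you cite ($(1+t)^{c}<t^{\alpha}$ for $c<\alpha$ and $\ln(1+(e^s-1)^{\alpha})\le\ln 2+\alpha s<cs$ for $\alpha<c$) do close the quantifier issue you flag, since for each target $\alpha$ (resp.\ $c$) you may invoke the hypothesis with a strictly smaller parameter. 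The only small point worth writing out is that $p$ and $q$ must be defined (and nonnegative) on all of $\mathbb{R}_+$, which the paper also has to patch by setting $\psi(\lambda)=1$ off the range of $|\cdot|$; your formulas handle this automatically.
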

\begin{proof}
Let $(X,\rho)$ be an unbounded proper metric space with a basepoint $x_0$.

First we are going to prove that $CB_P(X)\subset CB_L(X^\prime)$. Convention: we use $B'_r(x)$ for the balls in $X'$ and also $|x|'=\rho(x,x_0)$.

Let $f\in CB_P(X)$ and let $\phi\colon\mathbb{R}_+\to\mathbb{R}_+$ be any
asymptotically sublinear function.
Note that \begin{align*} B^\prime_{\phi(\vert x\vert^\prime)}(x)=&\lbrace y\in X\vert
\rho^\prime(y,x)<\phi(\vert x\vert^\prime)\rbrace\\ =&\lbrace y\in
X\mid\ln(1+\rho(y,x))<\phi(\ln(1+\vert x\vert))\rbrace\\ =&
\lbrace y\in X\mid \rho(y,x)<e^{\phi(\ln(1+\vert x\vert))}-1\rbrace.\end{align*} Denoting
by $\psi(\vert x\vert)=e^{\phi(\ln(1+\vert x\vert))}-1$, the last set is just
$B_{\psi(\vert x\vert)}(x)$.
Now for every $n\in\mathbb{N}$ there is some $c_n\in\mathbb{R}$ that
$$\ln\psi(\vert x\vert)<\ln e^{\phi(\ln(1+\vert x\vert))}=\phi(\ln(1+\vert
x\vert))<\frac{1}{n}\ln(1+\vert x\vert)),$$ whenever $\ln(1+\vert x\vert))>c_n$
(or equivalently $\vert x\vert>e^{c_n}-1$). The latter means that for every
$n\in\mathbb{N}$ there is  $c_n\in\mathbb{R}$ such that $\psi(\vert
x\vert)<(1+\vert x\vert)^{1/n}<\vert x\vert^{2/n}$ whenever $\vert
x\vert>\max(2,e^{c_n}-1)$ and so $\psi$ is an asymptotically subpower function
(assuming by definition $\psi(\lambda)=1$ for those $\lambda\in \mathbb{R}_+$
for which there is no $x\in X$, such that $\lambda=\vert x\vert$). Thus
$\mathrm{diam}(f(B^\prime_{\phi(\vert x\vert^\prime)}(x)))=\mathrm{diam}
(f(B_{\psi(\vert x\vert)}(x)))\to 0$. We conclude that $f\in CB_L(X^\prime)$.

Now we are going to prove that $CB_L(X^\prime)\subset CB_P(X)$.

Let $f\in CB_L(X^\prime)$ and let $\phi\colon\mathbb{R}_+\to\mathbb{R}_+$ be an
asymptotically  subpower function. Note that $\phi+1$ is an asymptotically
subpower function as well. Let $c_1>0$ be large enough, such that $1+\phi(x)<x<x+1$ for
every $x>c_1$ and for $n\in\mathbb{N}$, $n>1$, let $c_n>c_{n-1}$ be large enough,
such that $1+\phi(x)<x^{1/n}<(1+x)^{1/n}$ for every $x>c_n$. We will now construct a
function $\psi\colon \mathbb{R}_+\to\mathbb{R}_+$ as follows: $\psi(x)=x/n$ for $x\in
(c_n,c_{n+1}]$, $n=1,2,\dots$, and $\psi(x)=1$ for $x\in(0,c_1]$. The fact that
$\psi$ is an asymptotically sublinear function is obvious, as for every
$n\in\mathbb{N}$ there exists $c_{n+1}$ such that for every $x>c_{n+1}$ we have
$\psi(x)\leq\frac{1}{n+1}x<\frac{1}{n}x$. We will now show that for $x\in X$
with $\vert x\vert$ large enough, $B_{\phi(\vert x\vert)}(x)\subset
B^\prime_{\psi(\vert x\vert^\prime)}(x)$. Let then $y\in
B_{\phi(\vert x\vert)}(x)$. Therefore $\rho(x,y)<\phi(\vert x\vert)$. Hence
$$\rho^\prime(x,y)=\ln(1+\rho(x,y))<\ln(1+\phi(\vert x\vert))<\ln(1+\vert
x\vert)^{1/n}=\frac{1}{n}\ln(1+\vert x\vert)$$ whenever $\vert x\vert>c_n$, hence
$\rho^\prime(x,y)<\psi(\vert x\vert^\prime)$ and $y\in B^\prime_{\psi(\vert
x\vert^\prime)}(x)$ for $\vert x\vert$ large enough.

Since $\mathrm{diam}
(f(B^\prime_{\psi(\vert x\vert^\prime)}(x))\to 0$, we see that $\mathrm{diam}
(f(B_{\phi(\vert x\vert)}(x))$ and hence $f\in CB_P(X)$.
\end{proof}
Let $h_P X$ and $h_L X$ denote the compactifications of $X$ with respect to the algebras $CB_P(X)
$ and $CB_L(X)$ respectively. Let $\nu_P X$ and $\nu_L X$ denote the coronas of
these compactifications.
\begin{cor}
For an unbounded proper metric space $(X,\rho)$ we have $\dim\nu_P X=\dim\nu_L
X^\prime$.
\end{cor}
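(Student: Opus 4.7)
The plan is to derive this corollary as an immediate consequence of Theorem \ref{t:1}, using the standard dictionary between compactifications of a Tychonoff space and unital closed subalgebras of its algebra of bounded continuous functions.

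The first step is the observation that the metrics $\rho$ and $\rho'=\ln(1+\rho)$ generate the same topology on $X$, since $t\mapsto \ln(1+t)$ is a homeomorphism of $[0,\infty)$ onto itself. Consequently $X$ and $X'$ are the same topological space, and in particular the ambient algebras of all real-valued bounded continuous functions on them coincide. Theorem \ref{t:1} then gives the equality of subalgebras $CB_P(X)=CB_L(X')$ sitting inside this common algebra.

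The second step is to recall that $h_P X$ is, by definition, the compactification associated to the subalgebra $CB_P(X)$, while $h_L X'$ is the one associated to $CB_L(X')$. Since equal subalgebras determine the same compactification (up to the canonical homeomorphism which is the identity on $X$), we obtain a homeomorphism $h_P X\to h_L X'$ fixing $X$ pointwise. Restricting to complements of $X$ yields a homeomorphism $\nu_P X\to \nu_L X'$, and taking covering dimension finishes the proof.

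There is essentially no obstacle here: everything reduces to Theorem \ref{t:1} once one notices that the two algebras live inside the \emph{same} algebra of bounded continuous functions. The only point requiring a moment's care is checking that the subalgebras involved are genuinely closed, unital and separate points from closed sets, so that the standard compactification–subalgebra correspondence applies; this is built into the definition of the Higson-type algebras and is not a real difficulty.
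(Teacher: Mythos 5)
Your proposal is correct and follows essentially the same route as the paper: both deduce from Theorem \ref{t:1} that the equal algebras $CB_P(X)=CB_L(X^\prime)$ yield a homeomorphism $h_P X\to h_L X^\prime$ extending the identity, hence a homeomorphism of the coronas. Your added remarks (that $\rho$ and $\rho^\prime$ induce the same topology, and that the subalgebra--compactification correspondence applies) merely make explicit what the paper leaves implicit.
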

\begin{proof}
Let $(X,\rho)$ be an unbounded proper metric space. By  Theorem \ref{t:1} we
have $CB_P(X)=CB_L(X^\prime)$ and hence there is a homeomorphism $h_P X\to h_L
X^\prime$ which extends the identity and this homeomorphism induces a
homeomorphism on the coronas.
\end{proof}
\begin{lemma}
For a metric space $(X,\rho)$ a set $\Gamma$ of isometries of $X$ coincides with the set $\Gamma^\prime$ of isometries of $X^\prime$.
\end{lemma}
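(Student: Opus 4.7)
The plan is to show the two inclusions $\Gamma\subset\Gamma'$ and $\Gamma'\subset\Gamma$ simultaneously by exploiting the fact that $\rho'$ is obtained from $\rho$ by post-composition with a strictly monotone transformation. Concretely, I would write $\rho'=\varphi\circ\rho$ where $\varphi(t)=\ln(1+t)$, and observe that $\varphi\colon[0,\infty)\to[0,\infty)$ is a continuous, strictly increasing bijection; in particular $\varphi$ is injective.

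From this observation both directions are essentially tautological. For a map $f\colon X\to X$ and any $x,y\in X$, the identity $\rho(f(x),f(y))=\rho(x,y)$ is equivalent to $\varphi(\rho(f(x),f(y)))=\varphi(\rho(x,y))$, i.e.\ to $\rho'(f(x),f(y))=\rho'(x,y)$. Thus $f$ preserves distances in $(X,\rho)$ iff $f$ preserves distances in $(X,\rho')$. Since $X$ and $X'$ coincide as sets, the bijectivity requirement (if isometries are taken to be surjective) is the same condition on $f$ in both cases, so the two sets of isometries coincide.

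There is essentially no obstacle here: the only step requiring any care is noting the injectivity of $\varphi$, which is what makes the implication run in the nontrivial direction (from preservation of $\rho'$ back to preservation of $\rho$). This will be a very short proof, really one displayed equivalence, and it serves only as a preparatory remark for later use of the space $X'$ in cocompactness arguments.
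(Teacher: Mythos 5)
Your proposal is correct and is essentially the paper's own argument: the paper proves the two inclusions by the explicit computations $\rho'=\ln(1+\rho)$ and $\rho=e^{\rho'}-1$, which is exactly your observation that $\varphi(t)=\ln(1+t)$ is an invertible transformation, so a map preserves $\rho$ iff it preserves $\rho'$. Nothing is missing.
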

\begin{proof}
For any $\gamma\in\Gamma$ and any $x,y\in X$ we have
$$\rho^\prime(\gamma(x),\gamma(y))=\mathrm{ln}
(1+\rho(\gamma(x),\gamma(y)))=\mathrm{ln}(1+\rho(x,y))=\rho^\prime(x,y),$$ so
$\Gamma\subset\Gamma^\prime$ and for any $\gamma^\prime\in\Gamma^\prime$ and any
$x,y\in X$ we have
$$\rho(\gamma^\prime(x),\gamma^\prime(y))=e^{\rho^\prime(\gamma^\prime(x),\gamma^
\prime(y))}-1=e^{\rho^\prime(x,y)}-1=\rho(x,y),$$ therefore $\Gamma^\prime\subset\Gamma$, i.e.
 $\Gamma=\Gamma^\prime$.
\end{proof}
\begin{cor}
If $X$ is a cocompact metric space, so is $X^\prime$.
\end{cor}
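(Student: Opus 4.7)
The plan is to take the same compact set that witnesses the cocompactness of $X$ and show that it witnesses the cocompactness of $X'$ as well. So pick $K\subset X$ compact with $X=\bigcup\{f(K)\mid f\in\mathrm{Isom}(X)\}$, and verify the two requirements: that $K$ remains compact in $X'$, and that the orbit of $K$ under $\mathrm{Isom}(X')$ still covers everything.

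For the first requirement, I would argue that $\rho$ and $\rho'$ induce the same topology on $X$. This is immediate from the fact that $t\mapsto\ln(1+t)$ is a homeomorphism $[0,\infty)\to[0,\infty)$, with continuous inverse $s\mapsto e^{s}-1$; so a $\rho$-open ball of radius $r$ around $x$ equals the $\rho'$-open ball of radius $\ln(1+r)$ around $x$, and conversely. Consequently compact subsets of $(X,\rho)$ are the same as compact subsets of $(X,\rho')=X'$, and in particular $K$ is $\rho'$-compact.

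For the second requirement, I would invoke the lemma immediately preceding the corollary, which asserts $\mathrm{Isom}(X)=\mathrm{Isom}(X')$ as sets of self-maps of the underlying set $X$. Therefore
\[
\bigcup\{f(K)\mid f\in\mathrm{Isom}(X')\}=\bigcup\{f(K)\mid f\in\mathrm{Isom}(X)\}=X=X',
\]
which completes the verification.

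I do not anticipate any real obstacle: the two facts needed are (a) topological equivalence of $\rho$ and $\rho'$, which is essentially by inspection, and (b) equality of the isometry groups, which is already established as a separate lemma. The proof is thus little more than assembling these two observations.
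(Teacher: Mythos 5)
Your argument is correct and is exactly the one the paper intends: the corollary is stated as an immediate consequence of the preceding lemma identifying $\mathrm{Isom}(X)$ with $\mathrm{Isom}(X^\prime)$, with the (unstated) observation that $\rho$ and $\rho^\prime$ induce the same topology so the witnessing compact set $K$ remains compact. You have simply made explicit the details the paper leaves to the reader.
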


The following is proved in \cite{DS} (see Corollary 3.11 therein).
\begin{thm}
For a cocompact, proper, unbounded  and $M$-connected (for some $M>0$) metric space $X$ with
finite $\mathrm{asdim}_{AN} X$ we have
\begin{center}
$\mathrm{asdim}_{AN} X=\dim\nu_L X$
\end{center}
\end{thm}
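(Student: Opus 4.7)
The plan is to follow the general pattern of Dranishnikov-type theorems that identify a large-scale dimension of $X$ with a topological dimension of an associated corona; the sublinear corona $\nu_L X$ is constructed precisely so that the scaling of its Higson-sublinear functions matches the scaling appearing in the Assouad-Nagata definition. I would prove the two inequalities separately.

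For the easier inequality $\dim\nu_L X\leq \mathrm{asdim}_{AN} X$, suppose $\mathrm{asdim}_{AN} X\leq n$ with witnesses $c$ and $r_0$. For each scale $r>r_0$, take the given $r$-disjoint, $cr$-bounded cover of $X$ by $n+1$ families $\mathcal{U}_0,\dots,\mathcal{U}_n$. Passing to the closures of the members in $h_L X$ and restricting to the corona produces $n+1$ closed families whose union covers $\nu_L X$. The key claim is that in the corona the closures of two distinct members of the same $\mathcal{U}_i$ become disjoint, since any Higson sublinear function is asymptotically constant on a $cr$-bounded piece compared to the $r$-gap separating two such pieces. Running this for a cofinal sequence of scales $r$ and taking an inverse limit of the resulting nerve maps then yields $\dim\nu_L X\leq n$.

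The reverse inequality $\mathrm{asdim}_{AN} X\leq \dim\nu_L X$ is where cocompactness and $M$-connectedness enter in an essential way, and where I expect the main obstacle to lie. Starting from an open cover of $\nu_L X$ of order at most $n$, I would extend it to an open cover of $h_L X$ and pull it back to an open cover of $X$; the real difficulty is to refine these open sets to genuinely $r$-disjoint, $cr$-bounded families at every sufficiently large scale $r$, with a single constant $c$ independent of $r$. Cocompactness is needed to transfer a local scale-control uniformly across $X$ via the isometry group, while $M$-connectedness allows the refinement to be carried out by chain arguments without losing bounded-diameter control. Matching the linear scaling on both sides of the correspondence is the delicate step, and balancing the "sub\-linear" topology of $\nu_L X$ against the uniformly linear bound $cr$ is exactly what forces the cocompactness and $M$-connectedness hypotheses. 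This step is carried out in detail in \cite{DS}, so in practice my proof reduces to invoking their Corollary 3.11.
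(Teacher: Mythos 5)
The paper offers no proof of this theorem at all---it is imported verbatim as Corollary 3.11 of \cite{DS}---and your proposal, after an informal two-inequality sketch, likewise reduces to invoking that same corollary, so in substance the two coincide. One caveat about your sketch of the direction $\dim\nu_L X\leq\mathrm{asdim}_{AN}X$: each member of a $cr$-bounded family is a bounded set, hence has compact closure in the proper space $X$ and empty trace on the corona, so the ``closures of members'' picture is vacuous as stated; the genuine argument in \cite{DS} instead assembles data from all scales at once via partitions of unity and nerve maps, which is closer to the inverse-limit remark you append at the end.
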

\begin{cor}\label{c:1}
For a proper, unbounded, cocompact and $M$-connected  (for some $M>0$) metric space $X$ with
finite $\mathrm{asdim}_P X$ we have
\begin{center}
$\mathrm{asdim}_P X=\dim\nu_P X$.
\end{center}
\end{cor}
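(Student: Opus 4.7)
The plan is to deduce the corollary from the Dranishnikov--Smith theorem quoted above by applying it not to $X$ itself but to the logarithmically rescaled space $X^\prime$, and then translating the resulting equality through the three identifications already established in this section.

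First I would verify that $X^\prime$ satisfies all of the hypotheses of the Dranishnikov--Smith theorem. Properness and unboundedness of $X^\prime$ are recorded in the paragraph introducing the notation $X^\prime$. Cocompactness of $X^\prime$ is the content of the corollary that follows the lemma on isometries. For $M^\prime$-connectedness: given the $M$-connecting chain $x=x_0,x_1,\dots,x_k=y$ in $X$ with $\rho(x_i,x_{i+1})\le M$, the same chain works in $X^\prime$ with constant $M^\prime=\ln(1+M)$, since $\rho^\prime(x_i,x_{i+1})=\ln(1+\rho(x_i,x_{i+1}))\le\ln(1+M)$. Finiteness of $\mathrm{asdim}_{AN}X^\prime$ follows from the first theorem of the preliminaries, which gives $\mathrm{asdim}_{AN}X^\prime=\mathrm{asdim}_P X<\infty$ by assumption.

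Having checked the hypotheses, I would apply the Dranishnikov--Smith theorem to $X^\prime$ to conclude
\[
\mathrm{asdim}_{AN}X^\prime=\dim\nu_L X^\prime.
\]
Chaining this with the equality $\mathrm{asdim}_P X=\mathrm{asdim}_{AN}X^\prime$ of the first theorem and the equality $\dim\nu_P X=\dim\nu_L X^\prime$ of the corollary following Theorem~\ref{t:1} yields the desired identity $\mathrm{asdim}_P X=\dim\nu_P X$.

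The proof is essentially a bookkeeping exercise, so there is no genuine obstacle; the only point that requires a moment's thought is the verification that the $M$-connectedness hypothesis transfers from $X$ to $X^\prime$, but this is immediate since $\ln(1+t)$ is monotone increasing. All other hypotheses have been prepared by the preceding lemma and corollaries of the section.
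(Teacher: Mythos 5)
Your proposal is correct and follows essentially the same route as the paper: verify the hypotheses for $X^\prime$, apply the Dranishnikov--Smith theorem there, and chain the equalities $\mathrm{asdim}_P X=\mathrm{asdim}_{AN}X^\prime=\dim\nu_L X^\prime=\dim\nu_P X$. The only cosmetic difference is that the paper notes $\rho^\prime\le\rho$ to keep the same constant $M$, while you pass to $M^\prime=\ln(1+M)$; both are equally valid since the theorem only requires $M$-connectedness for some $M>0$.
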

\begin{proof}
Let $(X,\rho)$ be a proper, unbounded, cocompact and $M$-connected (for some
$M>0$) metric space with finite $\mathrm{asdim}_P X$. Then $X^\prime$ is proper,
unbounded and cocompact with finite $\mathrm{asdim}_{AN} X^\prime$. Since for any
$x,y\in X$ we have $\rho^\prime(x,y)=\ln(1+\rho(x,y))\leq \rho(x,y)$, the space
$X^\prime$ is $M$-connected as well.
Using previous results we have:
\begin{center}
$\mathrm{asdim}_P X=\mathrm{asdim}_{AN} X^\prime=\dim\nu_L X^\prime=\dim\nu_P X$
\end{center}
\end{proof}
\begin{cor}
For a finitely generated group $G$ with word metric and of finite subpower
asymptotic dimension we have:
\begin{center}
$\mathrm{asdim}_P G=\dim\nu_P G$.
\end{center}
\end{cor}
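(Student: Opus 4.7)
The plan is to reduce this corollary to Corollary \ref{c:1} by verifying that a finitely generated group $G$, equipped with its word metric $d_S$ with respect to a finite generating set $S$, satisfies all of its hypotheses. The hypothesis on finite subpower asymptotic dimension is given, so only the geometric conditions (proper, unbounded, cocompact, and $M$-connected) need to be checked, after which the conclusion follows directly.

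First I would handle the trivial case and reduction: if $G$ is finite, then $(G,d_S)$ is bounded and the corona $\nu_P G$ is empty, so the equality holds vacuously (and one can declare both sides to be $-1$ or exclude this case). Hence assume $G$ is infinite so that $(G,d_S)$ is unbounded. Next, to see that $G$ is proper, note that every closed ball $B_r(g)$ consists of finitely many group elements (since $S$ is finite and words of bounded length over $S$ form a finite set), and a finite subset of the discrete space $G$ is compact.

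The crucial observation is cocompactness: the group $G$ acts on itself by left translations, and each left translation $L_h(g)=hg$ is an isometry of the word metric since $d_S(hg_1,hg_2)=d_S(g_1,g_2)$. Taking $K=\{e\}$, which is a one-point (hence compact) subset, we get $G=\bigcup_{h\in G} L_h(K)$, showing $G$ is cocompact. For $M$-connectedness I would simply take $M=1$: given $g,g'\in G$, write $g^{-1}g'=s_1s_2\cdots s_k$ as a word over $S\cup S^{-1}$ and set $g_i=g\cdot s_1\cdots s_i$; then consecutive $g_i$ differ by a single generator and therefore $d_S(g_i,g_{i+1})\le 1$.

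With all hypotheses verified, Corollary \ref{c:1} applies to $(G,d_S)$ and yields $\mathrm{asdim}_P G=\dim\nu_P G$, completing the proof. There is no genuine obstacle here; the content of the corollary is entirely encapsulated in Corollary \ref{c:1}, and the work consists solely in recognizing that the word metric geometry of a finitely generated group automatically provides the required properties, with the isometric left regular action supplying cocompactness and the generating set supplying $1$-connectedness.
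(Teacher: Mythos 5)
Your proposal is correct and follows exactly the route the paper intends: the paper gives no proof at all, treating the statement as an immediate application of Corollary \ref{c:1}, and your verification that a finitely generated group with word metric is proper, unbounded (when infinite), cocompact via left translations, and $1$-connected via the generating set is precisely the implicit content. The only addition is your explicit handling of the finite-group case, which the paper silently ignores.
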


\section{Remarks}

Corollary \ref{c:1} provides an answer to one question formulated in \cite{KZ}. 

The remetrization $\rho\mapsto\rho'$ can be used also for reducing some questions concerning the asymptotic power dimension to the corresponding questions concerning the asymptotic Assouad-Nagata dimension.   In particular, we conjecture that this remetrization would be useful in finding of an alternative proof of a characterization of the asymptotic power dimension in terms of mappings into polyhedra (this was obtained by the first author).


\end{document}